\theoremstyle{definition}
\def\fnum{equation} 
\newtheorem{Thm}[\fnum]{Theorem}
\newtheorem{Cor}[\fnum]{Corollary}
\newtheorem{Lem}[\fnum]{Lemma}
\newtheorem{Con}[\fnum]{Conjecture}
\newtheorem{Pro}[\fnum]{Proposition}
\numberwithin{equation}{section}
\newcommand{\nn}{{\bf{n}}}
\newcommand{\dist}{{\text {dist}}}
\newcommand{\Hess}{{\text {Hess}}}
\def\RR{{\bold R}}
\def\SS{{\bold S}}
\newcommand{\dv}{{\text {div}}}
\newcommand{\e}{{\text {e}}}
\newcommand{\cC}{{\mathcal{C}}}
\newcommand{\cHP}{{\mathcal{HP}}}
\newcommand{\cH}{{\mathcal{H}}}
\newcommand{\cP}{{\mathcal{P}}}
\newcommand{\cS}{{\mathcal{S}}}
\newcommand{\eqr}[1]{(\ref{#1})}
\title[Differentiability of the arrival time]{Differentiability of the arrival time}
\author[]{Tobias Holck Colding}%
\address{MIT, Dept. of Math.\\
77 Massachusetts Avenue, Cambridge, MA 02139-4307.}
\author[]{William P. Minicozzi II}%
\thanks{The  authors
were partially supported by NSF Grants DMS  11040934 and DMS 1206827.}
\email{colding@math.mit.edu  and minicozz@math.mit.edu}
\begin{document}

\maketitle

\begin{abstract}
 For a monotonically advancing front (cf. \cite{Se2}), the arrival time is the time when the front reaches a given point.
We show that it is twice differentiable everywhere with uniformly bounded second derivative.  It is smooth away from the critical points where the equation is degenerate.  We also show that the critical set has finite codimensional two Hausdorff measure.

For a monotonically advancing front, the arrival time is equivalent to the level set method; a priori not even differentiable but only satisfies the equation in the viscosity sense, \cite{ChGG},  \cite{ES1}, \cite{OsSe}.  
Using that it is twice differentiable and that we can identify the Hessian at critical points,
 we show that it satisfies the equation in the classical sense.  

The arrival time has a game theoretic interpretation.
For the linear heat equation, there is a game theoretic interpretation that relates to 
  Black-Scholes option pricing.

From variations of the Sard and \L{}ojasiewicz theorems, we relate  differentiability to whether or not singularities all occur at only finitely many times for flows.  
\end{abstract}

\section{Introduction}

Let $M_t\subset \RR^{n+1}$ be a mean curvature flow (MCF) starting at a closed smooth mean convex hypersurface $M_0$.   Under the flow, $M_t$ remains mean convex and, thus, moves monotonically  inward as it
sweeps out{\footnote{Each $M_t$ bounds a compact domain $\Omega_t$ and $\Omega_t = \cup_{s\geq t} M_s$; see Subsection \ref{ss:31}.}} the compact domain $\Omega_0$ bounded by $M_0$.   The arrival time $u:\Omega_0 \to \RR$ is the time  when the front $M_t$ arrives at a point $x \in \Omega_0$
\begin{align}
 u(x)=\{t\,|\,x\in M_t\}\, .
\end{align}
Even though $M_0$ is smooth, 
the later $M_t$'s need not be but are  given by the level set method and the function $u$ is a priori only continuous.




\vskip2mm
Our main result is the differentiability of the arrival time for mean convex flows:

\begin{Thm}	\label{t:mainarrival}
The arrival time is twice differentiable everywhere, smooth away from the critical set,
 and has uniformly bounded second derivative.  Moreover,  
\begin{itemize} 
\item The critical set
is contained in   finitely many compact embedded $(n-1)$-dimensional Lipschitz submanifolds plus a set of dimension at most $n-2$.{\footnote{In fact, we prove much more.
For example,
 in $\RR^3$, the critical set is contained in finitely many (compact) embedded Lipschitz curves where the Hessian have   eigenvalues $0$ (with multiplicity $1$) and $-1$ (with multiplicity $2$) together with a countable set where the Hessian is $-\frac{1}{2}\,\delta_{i,j}$.}}
\item At each critical point the hessian is symmetric and has only two eigenvalues $0$ and $-\frac{1}{k}$; $0$ has multiplicity $n-k$ (which could be $0$) and $-\frac{1}{k}$ has multiplicity $k+1$.  Here $k=1,\cdots, n$.  
\item It satisfies the equation everywhere in the classical sense.  
\end{itemize}
\end{Thm}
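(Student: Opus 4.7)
My plan is to decompose the domain as $\Omega_0=\mathcal{R}\sqcup \mathcal{C}$ with $\mathcal{R}=\{\nabla u\neq 0\}$ and $\mathcal{C}=\{\nabla u=0\}$, to handle smoothness and the equation on $\mathcal{R}$ by classical level-set arguments, and then to use the structure theory of singularities of mean convex MCF together with uniqueness of tangent flows to obtain an exact quadratic Taylor expansion at each point of $\mathcal{C}$. All three bullet points will drop out of that expansion.

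On $\mathcal{R}$, the level sets $M_t$ are smooth mean convex hypersurfaces that foliate a neighborhood of any $x_0\in\mathcal{R}$ transversally, since $1/|\nabla u|$ equals the inward normal speed, which is the strictly positive mean curvature $H$. Inverting the smooth flow map $(y,\tau)\mapsto y+\tau\,\nn(y)/H(y)$ for $y\in M_{u(x_0)}$ exhibits $u$ as $C^\infty$ near $x_0$, and the level set PDE
\[
\mathrm{tr}\bigl((I-\hat{\nn}\otimes\hat{\nn})\,D^2u\bigr)=-1,\qquad \hat{\nn}=\nabla u/|\nabla u|,
\]
holds in the classical sense. The uniform bound on $|D^2 u|$ along $\mathcal{R}$ would come from bounding the second fundamental form $A$ of $M_t$ against the mean curvature via the Huisken--Sinestrari convexity estimates ($|A|\leq CH$), combined with the evolution of $|\nabla u|$.

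The heart of the proof is at a critical point $x_0\in\mathcal{C}$ with $t_0=u(x_0)$. Because the flow is mean convex, all tangent flows at $x_0$ are shrinking generalized cylinders $\SS^k_{\sqrt{2k}}\times\RR^{n-k}$ (Huisken; White; Huisken--Sinestrari), and by the Colding--Minicozzi uniqueness theorem for cylindrical tangent flows the parabolic rescaling $(t_0-t)^{-1/2}(M_t-x_0)$ converges smoothly with multiplicity one to a \emph{single} such cylinder with a fixed axis. Translating this convergence into a statement about $u$ yields the expansion
\[
u(x)=t_0-\frac{1}{2k}\sum_{i=1}^{k+1}\langle x-x_0,e_i\rangle^2+o(|x-x_0|^2),
\]
where $e_1,\dots,e_{k+1}$ span the axis orthogonal to the $\RR^{n-k}$ factor. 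This delivers twice differentiability at $x_0$, the stated two-eigenvalue structure of $D^2u(x_0)$ (eigenvalue $-1/k$ with multiplicity $k+1$, eigenvalue $0$ with multiplicity $n-k$), and eigenvalues in $[-1,0]$, which extends the global $L^\infty$ bound on $D^2u$ across $\mathcal{C}$.

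The dimensional statement on $\mathcal{C}$ follows by stratifying $\mathcal{C}=\bigsqcup_k\mathcal{C}_k$ by $k=k(x_0)$: spherical critical points ($k=n$) are isolated in space-time, hence form a countable set; for $k<n$, uniqueness of the tangent flow makes the spine direction depend continuously on the base point, so $\mathcal{C}_k$ is locally a Lipschitz graph of dimension $n-k$ over that axis, and a compactness/covering argument organizes these into finitely many closed $(n-1)$-dimensional Lipschitz submanifolds plus pieces of dimension at most $n-2$. Finally, the classical PDE at a critical point is a direct matrix computation from the Hessian model: any limit of $\hat{\nn}$ as $x\to x_0$ must lie in the $-1/k$-eigenspace, so $(D^2u)(\hat{\nn},\hat{\nn})=-1/k$ while $\mathrm{tr}(D^2u)=-(k+1)/k$, giving $-1$. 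I expect the main obstacle to be \emph{quantifying} the rescaled convergence near $x_0$ finely enough to produce an honest $o(|x-x_0|^2)$ remainder: it is precisely that estimate that upgrades a soft tangent-flow limit into a genuine Hessian and forces the $L^\infty$ bound on $D^2u$ to persist uniformly through $\mathcal{C}$.
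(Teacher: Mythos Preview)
Your overall architecture matches the paper's: split into regular and critical sets, get smoothness on $\mathcal{R}$ from the flow map, invoke uniqueness of cylindrical tangent flows at critical points, and cite structure theory for $\mathcal{C}$. Two substantive points need correction.

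First, the $C^{1,1}$ bound on $\mathcal{R}$ does \emph{not} follow from $|A|\leq CH$ alone. Writing $\nn=\nabla u/|\nabla u|$ and $|\nabla u|=H^{-1}$, the tangential components $\Hess_u(e_i,e_j)=A(e_i,e_j)/H$ are indeed controlled by $|A|/H$, but
\[
\Hess_u(e_i,\nn)=-\frac{H_i}{H^2},\qquad -\Hess_u(\nn,\nn)=\frac{\Delta H}{H^3}+\frac{|A|^2}{H^2},
\]
so you also need $|\nabla H|\leq CH^2$ and $|\Delta H|\leq CH^3$. The paper invokes the Haslhofer--Kleiner estimates $|\nabla^\ell A|\leq C_\ell H^{\ell+1}$ for $\ell=0,1,2$ (valid by Andrews' noncollapsing). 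Huisken--Sinestrari does not supply the higher-derivative bounds.

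Second, and more seriously, aiming for a Taylor expansion $u(x)=t_0-\tfrac{1}{2k}|(x-x_0)_+|^2+o(|x-x_0|^2)$ is both the wrong target and the wrong order of operations. A second-order Peano expansion of $u$ does \emph{not} imply that $\nabla u$ is differentiable at $x_0$, which is what ``twice differentiable'' means here and what is actually used in the classical/viscosity comparison. The paper instead: (i) proves the $C^{1,1}$ bound on $\mathcal{R}$ as above; (ii) uses that the singular set has codimension at least two to extend each $u_k$ Lipschitz-continuously across $\mathcal{C}$ (so the singular set \emph{equals} the critical set); (iii) at a critical point, computes the difference quotients $\nabla u(x)/|x|$ directly. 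For directions with $x_+\neq 0$ one has $|x|^2\leq -C\,u(x)$, the rescaled level set $\{u=u(x)\}/\sqrt{-u(x)}$ converges to the cylinder, and one reads off that the unit normal and mean curvature at the rescaled point converge, giving $\nabla u(x)/|x|\to -v_+/k$. For the axis direction $v_+=0$ one perturbs by $\epsilon$ into the $+$ factor, applies the previous case, and then uses the already-established Lipschitz bound on $\nabla u$ to send $\epsilon\to 0$. Thus the $C^{1,1}$ bound is an \emph{input} to the Hessian computation at critical points, not an output; your final sentence has this dependency reversed.

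Your description of $\mathcal{C}$ is correct in spirit, but the Lipschitz-submanifold structure is the main theorem of the cited singular-set paper (proved for the space-time singular set and then pushed down by the forgetful map $(x,t)\mapsto x$); your sketch of reproving it from continuity of the spine is plausible but is itself a substantial result, not a corollary.
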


\vskip2mm
A key point  is that the second derivatives exist even at the critical set where the flow is singular.  
Though it is known to be Lipschitz by a result of Evans and Spruck, \cite{ES1}, a priori it is not even differentiable but only satisfies
 the second order degenerate equation in the viscosity sense, \cite{ChGG}, \cite{E1}, \cite{ES1}.   In the convex case, where   the flow is smooth except at the point it becomes extinct, Huisken showed that the arrival time is $C^2$ in all dimensions, \cite{H2}, \cite{H3}.  
There is even more regularity in the plane, where Kohn and Serfaty showed, using \cite{GH}, that it is at least $C^3$, \cite{KS1}.  For $n>1$, Sesum, \cite{S}, using \cite{H3},  showed that  third derivatives do not exist even for convex hypersurfaces.   However, we will see in Theorem \ref{t:axidi} that there is more regularity in the direction of the zero eigenvalues.

\subsection{Level set method and viscosity solutions}   \label{s:levelset}
The idea behind the level set method is given an interface $M_0$ in $\RR^{n+1}$ of codimension one, bounding a (perhaps multiply connected) closed region $\Omega_0$, one  analyzes and computes its subsequent motion under a velocity field, \cite{OJK}, \cite{OsFe}, \cite{OsSe},  \cite{Se1}, \cite{Se3}. 
This velocity can depend on position, time, the geometry of the interface (e.g. its normal or its mean curvature) and the external physics. The idea is to define an at least continuous function $v(x, t)$, that represents the interface as the set where $v(x, t) = 0$. 
The level set function $v$ has the following properties{\footnote{Strictly speaking, the interface could have an interior; this does not occur in the case considered here.}}
\begin{itemize}
\item $v(x,t) > 0$ for $x\in \Omega_t \setminus \partial \Omega_t$.
\item $v(x,t) < 0$ for $x\in \RR^{n+1}\setminus \Omega_t$. 
\item $v(x,t) = 0$ for $x \in \partial \Omega_t=M_t$.
\end{itemize}
This is of great significance for numerical computation and in applications, primarily because topological changes such as breaking and merging are well defined.
When all level sets evolve by MCF,  this motion becomes
\begin{align} 
v_t=|\nabla v|\,\dv \left(\frac{\nabla v}{|\nabla v|}\right)\, ,
\end{align}
and has been studied extensively.  Whereas the work of Osher and Sethian was numerical, Evans and Spruck, \cite{ES1}, and, independently, Chen, Giga, and Goto, \cite{ChGG} provided the theoretical justification for this approach.  This is analytically subtle, principally because the mean curvature evolution equation is nonlinear, degenerate, and indeed even undefined at points where $\nabla v = 0$.  Moreover, $v$ is a priori not even differentiable, let alone twice differentiable.   They resolved these problems by introducing an appropriate definition of a weak solution, inspired by the notion of   ``viscosity solutions"; \cite{Ca}, \cite{CrIL}.  

When the front is advancing monotonically (so the mean curvature is non-negative) Evans and Spruck, \cite{ES1}, showed that $v(x,t)=u(x)-t$, where $u$ is Lipschitz and satisfies (in the viscosity sense; see Section \ref{s:classical} for details) 
\begin{align} \label{e:levelset}
-1=|\nabla u|\,\dv \left(\frac{\nabla u}{|\nabla u|}\right)\, .
\end{align}
Obviously, $u^{-1}(t)$ is $M_t$; in other words: $u$ is the arrival time; see  \cite{Se2} for numerics.

\subsection{Game theoretic intepretation}  We will next briefly explain a game theoretical interpretation of the arrival time following, essentially verbatum, Kohn and Serfaty, \cite{KS2}, see also \cite{E2}, \cite{GL}, \cite{K}, \cite{KS1}--\cite{KS3}, \cite{Sp}. 

In a two-person game, with players Paul and Carol and a small parameter $\epsilon$. Paul is initially at some point $x$ in a bounded domain $\Omega$; his goal is to exit as soon as possible. Carol wants to delay his exit as long as possible. The game proceeds as follows:

\begin{itemize}
\item Paul chooses a direction, i.e., a unit vector $|v|=1$.
\item Carol can either accept or reverse Paul's choice, i.e., she chooses $b=\pm 1$.
\item Paul then moves   $2\,\epsilon$ in the possibly reversed direction, i.e., from $x$ to $x + 2\,\epsilon\, b\, v$. 
 
\end{itemize}
This cycle repeats until Paul reaches $\partial \Omega$. Let $u_{\epsilon}$ be $\epsilon^2$ times the number of steps that Paul needs to exit.  
Can Paul exit? Yes indeed.  For Paul's optimal strategy the time before exit $\lim_{\epsilon\to 0}u_{\epsilon}$ is exactly the arrival time.\footnote{\cite{KS1}  proved that $\lim_{\epsilon\to 0}u_{\epsilon}$ exists and is the arrival time $u$.  They gave an error estimate if $u$ is $C^3$.}

The Paul-Carol game was introduced in the 1970s by Joel Spencer, \cite{Sp}, as a heuristic for the study of certain combinatorial problems.  See \cite{KS2}, \cite{KS3} for other  game theoretic interpretations of parabolic equations, including one for the linear equation that is 
close to Black-Scholes option pricing.

\subsection{Finitely many singular times}
 
Differentiability is connected to the following well-known conjecture; \cite{AAG}, page 533 of \cite{W1},  conjectures 0.5 in \cite{CM4} and  6.5 in \cite{CMP}:\footnote{It is interesting to compare with a series of papers by Bamler, \cite{Ba}, on
 the long-time behavior of Ricci flows with surgery. He shows that only finitely many surgeries occur  if the surgeries are performed correctly.}

\begin{Con}	\label{c:con}
 For $M_t$ as above, the evolving hypersurface is completely smooth except at  finitely many times.\footnote{In low dimensions it is known by \cite{CM4} that connected components of the singular set are contained in time-slices and almost all time slices are completely smooth.}  
\end{Con}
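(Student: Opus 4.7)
The plan is to connect singular times of the flow with critical values of the arrival time $u$, and then use the structural information from Theorem \ref{t:mainarrival} together with Sard-type and \L{}ojasiewicz-type statements to reduce the set of singular times to a finite one. Since $M_t$ is smooth wherever $u$ is smooth, and by Theorem \ref{t:mainarrival} $u$ is smooth away from its critical set $C=\{\nabla u=0\}$, the singular times are precisely contained in the set of critical values $u(C)$. Hence the conjecture reduces to showing that $u(C)$ is finite.

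The first step is a Sard-type input. Since $u$ is twice differentiable everywhere with uniformly bounded Hessian, a Morse--Sard argument (in a form valid for $C^{1,1}$ functions) already shows that $u(C)$ has Lebesgue measure zero, which is consistent with the known fact that almost every time-slice is smooth. The second step is to promote ``measure zero'' to ``isolated,'' and this is where the detailed structure of Theorem \ref{t:mainarrival} becomes essential: at each critical point $p$ the Hessian of $u$ has exactly two eigenvalues, $0$ with multiplicity $n-k$ and $-1/k$ with multiplicity $k+1$. Thus $p$ is a Morse--Bott-type critical point along the kernel direction, and a second-order Taylor expansion shows that, locally near $p$, the critical set is tangent to the $(n-k)$-dimensional $0$-eigenspace, while $u$ changes quadratically in the complementary $(k+1)$ directions. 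In particular $u$ is locally constant on $C$ near $p$, with value $u(p)$.

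The third step is to pass from ``locally constant'' to ``finitely many values.'' By Theorem \ref{t:mainarrival}, $C$ is contained in finitely many compact embedded $(n-1)$-dimensional Lipschitz submanifolds $N_1,\dots,N_m$ together with a set of Hausdorff dimension at most $n-2$. On each $N_i$ the gradient vanishes, so integrating $\nabla u\equiv 0$ along Lipschitz paths (using that $u$ is differentiable with $du=0$ at each point of $N_i$) shows $u$ is constant on each connected component of $N_i$. Because $N_i$ is compact Lipschitz, it has finitely many components, so these contribute only finitely many critical values. For the lower-dimensional remainder one would use the \L{}ojasiewicz-type inequality following from the Hessian identification at critical points (and the refined Hessian structure described in the $\RR^3$ footnote, where this remainder is only a countable set of ``spherical'' points) to show that each limit critical value is already attained on some $N_i$; combined with compactness of $\Omega_0$ this forces $u(C)$ to be finite.

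The main obstacle, as expected, is the third step: controlling the dimension-$(n-2)$ portion of $C$. Ruling out infinite accumulation of critical values coming from this exceptional set requires a parabolic \L{}ojasiewicz estimate of the type used to prove uniqueness of cylindrical tangent flows in \cite{CM4}, transferred from spacetime to the arrival time picture. The natural route is to show that near any critical point $p$, the full critical set (not just the $(n-1)$-dimensional part) is contained in a single smooth $(n-k)$-manifold on which $u\equiv u(p)$, so that every connected component of $C$ is compact and contains at least one $N_i$-point; then finiteness of $\{N_i\}$ directly yields finiteness of $u(C)$, hence of the singular times of $M_t$.
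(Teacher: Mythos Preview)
The statement you are addressing is stated in the paper as a \emph{conjecture}; the paper does not prove it, only discusses heuristics (Sard, \L{}ojasiewicz) that would apply if $u$ had more regularity than is actually established. So there is no paper proof to compare against, and your proposal must stand on its own.

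It does not. There are two concrete gaps. First, in your step~3 you assert that ``on each $N_i$ the gradient vanishes'' and then integrate $\nabla u\equiv 0$ along Lipschitz paths in $N_i$. But Theorem~\ref{t:mainarrival} says only that the critical set $C$ is \emph{contained in} $\bigcup N_i$ together with a lower-dimensional piece; it does not say $N_i\subset C$. Hence $\nabla u$ need not vanish on $N_i$, and the path-integration argument is unavailable. What you actually know is $\nabla u=0$ on $C\cap N_i$, which a priori could be a totally disconnected subset of $N_i$; vanishing of $\nabla u$ on such a set gives no constancy of $u$ there.

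Second, your step~2 claim that the pointwise Hessian identification forces $u$ to be ``locally constant on $C$ near $p$'' is unjustified. The arrival time is only twice differentiable with bounded Hessian, not $C^2$: Theorem~\ref{t:mainarrival} computes $\Hess_u$ \emph{at} each critical point, but the Hessian is not known to be continuous, so a second-order Taylor expansion at $p$ carries no information about the local structure of $C$ near $p$. A Morse--Bott conclusion of the kind you invoke requires at least continuity of $\Hess_u$ along $C$. The paper's own discussion immediately following the conjecture makes exactly this point: Sard-type and \L{}ojasiewicz-type arguments would settle the question \emph{if} $u$ were, say, $C^3$ or real analytic, and the corollary there proves finiteness only under the analyticity hypothesis. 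Your final paragraph correctly flags the difficulty with the dimension-$(n-2)$ stratum, but the argument has already broken down before that stage; the conjecture remains open.
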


 The connection is that the regularity of a function $u$ controls the size of the set $\cC$ of critical values.   In dimension three,
if   $u: \Omega \subset \RR^3 \to \RR$ is at least $C^3$, then $\cC$ has measure zero by Sard's theorem, see, e.g., \cite{Y}.  Moreover, if $u$ is $C^k$ for some $k> 3$, then $\cC$ has
 dimension at most $\frac{3}{k} < 1$.  Similarly, in higher dimensions.  
The arrival time is only twice differentiable which   does not seem like enough  to get measure zero from Sard's theorem.  However,  using \cite{CM4},  we will see that
 the critical set is contained in a finite union of compact curves plus countably many points and, thus,   Sard's theorem on the line suggests that $\cC$  has dimension at most $\frac{1}{2}$.   This heuristic argument uses the structure  established in \cite{CM4},  where the  bound  $\frac{1}{2}$ for the dimension was rigorously shown by a different argument.

When the arrival time is real analytic, there can be only finitely many singular times.  To see this, 
recall the   gradient \L{}ojasiewicz inequality, \cite{L},   \cite{CM2}, \cite{CM3}:  
\begin{quote}
A function $u$ is said to satisfy the gradient \L{}ojasiewicz inequality if for every $p\in \Omega$, there is a possibly smaller neighborhood $W$ of $p$ and constants $\beta\in (0,1)$ and $C>0$ such that $|u(x)-u(p)|^{\beta}\leq C\,|\nabla_x u|$  for all $x\in W$.
\end{quote}
\L{}ojasiewicz proved this inequality for   analytic functions and, thus, a critical point  
 has a neighborhood with no other critical values.  Consequently, $\cC$ is finite if $\Omega$ is compact:

\begin{Cor}
There are only finitely many singular times if the arrival time is real analytic.  
\end{Cor}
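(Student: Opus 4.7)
The plan is to combine the gradient \L{}ojasiewicz inequality (which the real analyticity hypothesis supplies) with compactness of the domain and the classical differentiability established in Theorem \ref{t:mainarrival}. First I identify singular times of the flow with critical values of $u$: by Theorem \ref{t:mainarrival} the arrival time is smooth away from its critical set and satisfies \eqref{e:levelset} classically there, so at any point $x$ with $\nabla u(x) \neq 0$ the implicit function theorem exhibits $M_t = u^{-1}(t)$ as a smooth hypersurface near $x$ for $t = u(x)$, and the classical form of the equation then gives that $M_t$ is evolving smoothly by MCF near $x$. Consequently, every singular time of the flow equals $u(p)$ for some critical point $p$ of $u$.

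Next I show that the set of critical values of $u$ is finite. Fix a critical point $p$; the \L{}ojasiewicz inequality furnishes a neighborhood $W_p$, an exponent $\beta \in (0,1)$, and a constant $C>0$ with
\begin{equation*}
|u(x) - u(p)|^{\beta} \leq C\,|\nabla_x u|
\end{equation*}
for all $x \in W_p$. If $q \in W_p$ is itself critical, the right side vanishes, forcing $u(q) = u(p)$. Hence inside $W_p$ the function $u$ takes only the single critical value $u(p)$, which is thereby isolated in the set of critical values.

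Compactness then closes the argument. Since $M_0$ is closed, $\overline{\Omega_0}$ is compact; and because $u$ is $C^1$ on $\overline{\Omega_0}$ by Theorem \ref{t:mainarrival}, the critical set $K = \{x \in \overline{\Omega_0} : \nabla u(x) = 0\}$ is closed, hence compact. Covering $K$ by the neighborhoods $\{W_p\}_{p \in K}$ and extracting a finite subcover $W_{p_1},\ldots,W_{p_N}$, the previous paragraph gives $u(K) \subset \{u(p_1),\ldots,u(p_N)\}$, so $u$ has only finitely many critical values and therefore the flow has only finitely many singular times. The argument is mostly a packaging of tools; the only nonroutine ingredient beyond \L{}ojasiewicz is the first paragraph's identification of singular times with critical values of $u$, which is precisely where the newly established smoothness of $u$ off its critical set (and the classical satisfaction of the equation) is essential.
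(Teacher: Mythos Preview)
Your proof is correct and follows essentially the same route as the paper: the paper's argument (given in the discussion immediately preceding the corollary) is precisely that the gradient \L{}ojasiewicz inequality for real analytic functions forces each critical point to have a neighborhood containing no other critical values, and then compactness of $\Omega$ yields finiteness of $\cC$. You have supplied the details the paper leaves implicit---the identification of singular times with critical values via Theorem~\ref{t:mainarrival} and the explicit finite-subcover argument---but the strategy is the same. One small remark: the phrase ``which is thereby isolated in the set of critical values'' at the end of your second paragraph is slightly premature, since the local statement about $W_p$ alone does not rule out critical points far from $p$ with values near $u(p)$; however, this is harmless because your third paragraph gives the correct finiteness argument directly from the finite subcover without relying on isolation.
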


For mean convex MCF the    \L{}ojasiewicz inequality would say (since  $H\,|\nabla u|=1$ by \eqr{e:Hequal}) that for   $p\in \Omega$, there is a  neighborhood $W$ of $p$ and constants $\beta\in (0,1)$,  $C>0$ such that $H(x)\,|t(x)-t(p)|^{\beta}\leq C$  for all $x\in W$.
Since $|A|^2\leq C\,H^2$  for a mean convex MCF  by the parabolic maximum principle
for   $C$ depending on $M_0$, the   \L{}ojasiewicz inequality would say
\begin{align}
|A(x)|\,|t(x)-t(p)|^{\beta}\leq C\, .
\end{align}
In other words, $|A|$ blows-up at a singular point at most like $|t-t_0|^{-\beta}$, where $t_0$ is the singular time.    By a standard  ODE comparison argument, it blows up at least like $|t-t_0|^{-\frac{1}{2}}$.  If $|A|$  blows-up at a rate of at most $|t-t_0|^{-\frac{1}{2}}$, then the singularity is said to be type one.


 \section{Preliminary estimates}

\subsection{Mean convex MCF}
 We have already seen that the level set method is equivalent to the arrival time function for a monotonically advancing front. Namely, $v(x,t)=u(x)-t$ by Subsection \ref{s:levelset}.  Since we will use the equation for $u$ (i.e., \eqr{e:levelset}) several times we will deduce it for completeness.
Suppose therefore that $M_t \subset \RR^{n+1}$ is a mean convex MCF and $u=t$.  As we will explain in Section \ref{s:classical} the equation for $u$ is initially interpreted in classical sense at points where $\nabla u\ne 0$ and in the barrier, or viscosity sense, everywhere else.  For the sake of deducing the equation assume we are at a point where $\nabla u\ne 0$ and $u$ is smooth, then
\begin{align}
\dv_{M_t}\,\left(\frac{\nabla u}{|\nabla u|}\right)\,\frac{\nabla u}{|\nabla u|}=-x_t\, .
\end{align}
Since $u(x(t))=t$, we have that $\langle \nabla u,x_t\rangle =1$, or, equivalently, 
\begin{align}
x_t=\frac{1}{|\nabla u|}\,\frac{\nabla u}{|\nabla u|}=\frac{\nabla u}{|\nabla u|^2}\, .
\end{align}
Putting this together gives
\begin{align}	\label{e:elliptic}
-1=|\nabla u|\,\dv_{M_t}\,\left(\frac{\nabla u}{|\nabla u|}\right)=|\nabla u|\,\dv\,\left(\frac{\nabla u}{|\nabla u|}\right)=\Delta\,u- \Hess_u \left(  \frac{\nabla u}{|\nabla u|} , \frac{\nabla u}{|\nabla u|}
	\right) =\Delta_1u\, ;
\end{align}
where $\Delta_1$ is the
$1$-Laplacian.{\footnote{It is more common to define the $1$-Laplacian as $\dv \left( \frac{ \nabla u }{|\nabla u|} \right)$ which differs by a factor of $|\nabla u|$;  cf. \cite{E2}.}}  The operator $\Delta_1$ is the trace of the Hessian over the $n$-dimensional subspace orthogonal to $\nabla u$.  It is nonlinear and degenerate elliptic.  The ordinary Laplacian $\Delta$ is the sum of $\Delta_1$ and the $\infty$-Laplacian $\Delta_{\infty}u = \Hess_u \left(  \frac{\nabla u}{|\nabla u|} , \frac{\nabla u}{|\nabla u|} \right)$.  
The mean curvature $H$ (with respect to the outward unit normal of $\Omega_t$) is
\begin{align}  \label{e:Hequal}
H= |\nabla u|^{-1}  \, .
\end{align}

As an example, consider the shrinking round cylinder $\SS^k\times \RR^{n-k}$ in $\RR^{n+1}$ with radius $\sqrt{-2kt}$ for $t<0$ so that at time $-1$ the radius is $\sqrt{2k}$ and it becomes extinct at time $0$.  In this case, $u=-\frac{1}{2k}\left( x_1^2 + \dots + x_{k+1}^2 \right)$ is analytic and the critical set is $\{0\}\times \RR^{n-k}$.

\subsection{$C^{1,1}$ estimates away from the singular set}


\begin{Lem}	\label{l:C11}
There is a constant $C_{1,1}$ depending on the initial hypersurface so that
\begin{align}
	\left| \Hess_u \right| \leq C_{1,1}	{\text{ on the regular set (the complement of the singular set for the flow)}}\, .
\end{align}
\end{Lem}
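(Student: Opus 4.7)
The plan is to decompose $\Hess_u$ at any regular point with respect to a frame adapted to the level set of $u$ through that point, and then bound each block separately using the standard a priori estimates for the mean curvature of a mean convex MCF. Fix $x_0$ in the regular set, let $M=u^{-1}(u(x_0))$, and choose an orthonormal frame $e_1,\dots,e_n,\nu$ with $e_i$ tangent to $M$ and $\nu$ parallel to $\nabla u/|\nabla u|$. Since the regular set is open, these frames extend smoothly to a neighborhood by requiring $e_i$ to remain tangent to nearby level sets.

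For the tangential block, using $e_j(u)\equiv 0$ on each level set gives $\Hess_u(e_i,e_j)=-\langle\nabla_{e_i}e_j,\nabla u\rangle$, which combined with $H\,|\nabla u|=1$ from \eqref{e:Hequal} identifies this block with $\pm A/H$. For the mixed block, the symmetry of the Hessian and $|\nabla u|=1/H$ give $\Hess_u(\nu,e_i)=e_i(|\nabla u|)=\mp e_i(H)/H^2$. For the normal--normal block I would parametrize an integral curve $x(s)$ of $\nabla u/|\nabla u|^2$ through $x_0$; since $\nabla u/|\nabla u|^2=-H\nu$, this is (up to sign) exactly an MCF trajectory, along which $u(x(s))=s$. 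Differentiating this identity twice and using the standard MCF evolutions $\partial_s\nu\in TM_s$ and $H_s=\Delta_{M_s}H+|A|^2H$ yields
\[
\Delta_\infty u \;=\; -\,\frac{\Delta_{M_s}H}{H^3}\;-\;\frac{|A|^2}{H^2}.
\]

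With these three formulas the lemma reduces to the scale-invariant bounds $|A|\leq CH$, $|\nabla H|\leq CH^2$, and $|\Delta_{M_s}H|\leq CH^3$ on the mean convex MCF, each with $C$ depending only on $M_0$. The first is the pinching estimate already cited above, from the parabolic maximum principle applied to $|A|^2/H^2$. The other two are Ecker--Huisken style Bernstein estimates obtained by applying the maximum principle to the scale-invariant quantities $|\nabla A|^2/H^4$ and $|\nabla^2A|^2/H^6$ and then tracing. Plugging in, the tangential block is $O(|A|/H)$, the mixed block is $O(|\nabla H|/H^2)$, and $|\Delta_\infty u|\leq |A|^2/H^2+|\Delta_{M_s}H|/H^3$, all $O(1)$, giving $|\Hess_u|\leq C_{1,1}$ pointwise on the regular set.

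The main obstacle is verifying that the constants in the gradient and Laplacian estimates for $H$ are truly uniform in time, depending only on $M_0$ and not on the proximity to a singular time. This is exactly what the parabolic maximum principle delivers for the right scale-invariant combinations: once $|\nabla A|^2/H^4$ and $|\nabla^2 A|^2/H^6$ are controlled on $M_0$, the bound is preserved on every regular portion of the flow, which is why the estimate survives all the way up to the singular set and yields a uniform $C^{1,1}$ bound.
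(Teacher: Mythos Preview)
Your decomposition of $\Hess_u$ into tangential, mixed, and normal--normal blocks, and the formulas you derive for each block, are exactly those in the paper's proof. The paper computes $\Hess_u(e_i,e_j)=A(e_i,e_j)/H$, $\Hess_u(e_i,\nn)=-H_i/H^2$, and $-\Hess_u(\nn,\nn)=\Delta H/H^3+|A|^2/H^2$, and then reduces everything to the scale-invariant curvature estimates $|\nabla^{\ell}A|\leq C\,H^{\ell+1}$ for $\ell=0,1,2$.

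The one substantive discrepancy is in how you justify those curvature estimates. You assert that $|\nabla A|\leq CH^2$ and $|\nabla^2 A|\leq CH^3$ follow for mean convex flow from ``Ecker--Huisken style Bernstein estimates'' via the parabolic maximum principle applied to $|\nabla A|^2/H^4$ and $|\nabla^2 A|^2/H^6$. For \emph{convex} flow this kind of argument goes back to Huisken, but for merely \emph{mean convex} flow a direct maximum principle on these quantities does not obviously close up: the evolution equations pick up terms that are not controlled by $|A|\leq CH$ alone. The paper instead cites the Haslhofer--Kleiner result \cite{HaK}, whose proof relies on Andrews' non-collapsing \cite{An}, \cite{AnLM} as an essential additional ingredient. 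So your last paragraph overstates what the bare maximum principle delivers; the honest route is to invoke \eqref{t:HaK} from \cite{HaK}, as the paper does. With that correction, your proof is the paper's proof.
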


In contrast, the derivatives of the hessian are not uniformly bounded in general, \cite{S}.    
To prove the lemma, we need that, by a result of Haslhofer-Kleiner, \cite{HaK}{\footnote{This applies by Andrews, \cite{An}, \cite{AnLM}.}}, for each
integer $\ell \geq 0$,  there is a 
 $C$ (depending on $M_0$ and $\ell$)
 so that 
\begin{align}  	\label{t:HaK}
	  \left| \nabla^{\ell} A \right| \leq C \, H^{\ell +1} \, .
\end{align}


\begin{proof}[Proof of Lemma \ref{l:C11}]
We will consider three cases for the various components of the Hessian.

Since $H = |\nabla u|^{-1}$ and, given unit vectors $e_i$ and $e_j$ tangent to the level set, we have
\begin{align}	\label{e:hessureg}
	A(e_i , e_j) = \frac{ \Hess_u (e_i , e_j)}{|\nabla u|}  = H \, \Hess_u (e_i , e_j)  \, ,
\end{align}
it follows that  $ \left|  \Hess_u (e_i , e_j) \right| = \frac{ |A(e_i , e_j)|}{H} $
  is uniformly bounded by the case $\ell = 0$ of \eqr{t:HaK}.

 For the double normal direction, use that $|\nabla u| = H^{-1}$ and $x_t = \frac{\nabla u}{|\nabla u|^2} = H \, \nn$ so that
\begin{align}
	-\Hess_u (\nn , \nn) &= - \nabla_{\nn} |\nabla u| = - \frac{1}{H} \, \partial_t \, H^{-1} = \frac{ \partial_t H }{H^3} 
	= \frac{ \Delta H}{H^3} + \frac{|A|^2}{H^2} \, ,
\end{align}
where the last equality is Simons' equation for $H$.  We have already bounded the last term and the remaining $\frac{ \Delta H}{H^3}$ term is bounded
by the case $\ell =2$ of \eqr{t:HaK}.

Similarly, the hessian in a mixed tangential/normal direction is given by
\begin{align}
	\Hess_u (e_i , \nn) = \nabla_{e_i} |\nabla u| = \nabla_{e_i} H^{-1} = -\frac{ H_i }{H^2} \, .
\end{align}
This term is bounded by the case $\ell =1$ of \eqr{t:HaK}, completing the proof.
\end{proof}

\subsection{Level set flow}		\label{ss:31}

Up until now, we have worked solely on the regular set for the flow where the solution $u$ is smooth and $\nabla u \ne 0$.  
We will now work across the singularities.

We begin by recalling the properties of mean convex MCF starting from a smooth closed mean convex hypersurface:
\begin{enumerate}
\item There is a unique Lipschitz function $u$ giving a viscosity solution of the level set flow.  
\item The flow is non-fattening (i.e., the level sets have no interior), 
 each level set is the boundary of a compact mean convex set,    and $u$ is defined everywhere on the interior.
 \item  The
 level sets define a Brakke flow of integral varifolds.
 \item Each tangent flow at a singularity is a multiplicity one (generalized) cylinder.

\item On the regular set for the flow, the function $u$ is smooth and $\nabla u \ne 0$.
 
\end{enumerate}

 \vskip2mm
 Claim (1) is theorem $7.4$ in 
  \cite{ES1}; cf. \cite{ChGG}.  Claim (2) follows from theorems $3.1$ and $3.2$ and corollary $3.3$ in \cite{W2}.  Claim (3) is theorem $5.1$ in \cite{W2}; cf. 
  \cite{ES2}, 
 \cite{MS}.   For Claim (4), see
   theorem $3$ in \cite{W4} (cf. \cite{HS}).  
   The last claim is almost a tautology.  If the flow is smooth in a neighborhood, we can write it as a normal graph $(x,t) \to x + v(x,t) \, \nn (x)$ of a smooth function $v(x,t)$ defined on the level set with
  $v(x,0) = 0$.  The map  is smoothly invertible and   $u$ is  the $t$ component of the inverse.  Invertibility follows from the inverse function theorem since the
   differential of this map has full rank since $H$ does not vanish.  Finally, $\nabla u$ does not vanish on the regular part because
     $H = |\nabla u|^{-1}$.

\subsection{Structure of the critical set}
 
  Let $\Phi: (x,t)\to x$ be the forgetful map from $\RR^{n+1}\times \RR$ with the parabolic metric{\footnote{The parabolic distance $\dist_{\cP}$ on $\RR^{n+1} \times \RR$
is $\dist_{\cP} \left( (x,s) , (y,t) \right) = \max \{ |x-y| , |s-t|^{\frac{1}{2} }  \}$.  The parabolic Hausdorff measure $\cHP_k$ is the $k$-dimensional Hausdorff measure with respect to $\dist_{\cP}$;
see   \cite{CM4}.}} 
   to $ \RR^{n+1}$ with the Euclidean metric.
By \cite{CM4}, 
 the space-time singular set $\cS$ is contained in   finitely many compact embedded $(n-1)$-dimensional Lipschitz submanifolds plus a set of dimension at most $n-2$.
 The same holds for $\Phi (\cS)$ since  $\Phi$  is distance non-increasing.  The critical set is contained in $\Phi (\cS)$ by (5), giving the first part of Theorem \ref{t:mainarrival}.  
Thus, also
\begin{align}
\cH_{n-1}(\{x\in\RR^{n+1}\,|\,\nabla_xu=0\}) \leq \cH_{n-1}(\Phi(\cS)) \leq \cHP_{n-1} (\cS) 
<\infty\, .
\end{align}

\subsection{The singular set equals the critical set}

\begin{Cor}	\label{c:uc1}
  $u$ is $C^{1,1}$ and the singular set is equal to the critical set   $\{ \nabla u = 0 \}$.
\end{Cor}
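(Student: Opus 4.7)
The plan is to define $V:\Omega_0\to\RR^{n+1}$ by $V=\nabla u$ on the regular set $\Omega_0\setminus\Phi(\cS)$ and $V\equiv 0$ on $\Phi(\cS)$, and then to show that $V$ is $C_{1,1}$-Lipschitz on $\Omega_0$ and equals the classical gradient of $u$ at every point. Once this is established, $u\in C^{1,1}(\Omega_0)$, one has $\Phi(\cS)\subset\{\nabla u=0\}$, and the reverse inclusion follows from (5) in Section~\ref{ss:31} (which says $\nabla u\ne 0$ on the regular set); hence the critical set equals the singular set.

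The first step is continuity of $V$ across $\Phi(\cS)$. Since $|\nabla u|=H^{-1}$ on the regular set by \eqref{e:Hequal}, I need $H(x)\to\infty$ along every sequence $x_k\to p\in\Phi(\cS)$. If instead $H(x_k)\leq C_0$ along a subsequence, then by the $\ell=0$ case of \eqref{t:HaK} also $|A|(x_k)\leq CC_0$, and combined with the non-collapsing/regularity theory for mean convex MCF (via Andrews and Haslhofer--Kleiner) this forces the flow to extend smoothly through $(p,u(p))$, contradicting $p\in\Phi(\cS)$. Hence $V(x)\to 0=V(p)$ as $x\to p$, so $V$ is continuous.

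Next comes the global Lipschitz bound on $V$. Lemma~\ref{l:C11} gives $|DV|=|\Hess_u|\leq C_{1,1}$ on the regular set, which is an open set where $V$ is smooth. To upgrade to a global bound I will connect any two regular points $x,y$ by a rectifiable curve $\gamma$ in the regular set of length at most $(1+\epsilon)|x-y|$; integrating $\Hess_u$ along $\gamma$ then gives $|V(x)-V(y)|\leq C_{1,1}(1+\epsilon)|x-y|$. The path-avoidance step is where the codimension-two bound $\cH_{n-1}(\Phi(\cS))<\infty$ enters: consider the two-parameter family of arcs $\gamma_z(t)=(1-t)x+ty+t(1-t)z$ with $z$ in a small disk in $(y-x)^\perp$, and a Fubini/slicing argument against $\cH_{n-1}$ shows that the set of $z$ for which $\gamma_z$ meets $\Phi(\cS)$ has Lebesgue measure zero in that disk, so almost every $\gamma_z$ is a valid path. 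Sending $\epsilon\to 0$ and using continuity of $V$ extends the bound to all $x,y\in\Omega_0$.

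It remains to identify $V$ with the classical gradient of $u$. On the regular set this holds by construction; since $\Phi(\cS)$ has Lebesgue measure zero, $\nabla u=V$ almost everywhere on $\Omega_0$. Combined with Lipschitz continuity of $u$ and continuity of $V$, the fundamental theorem of calculus along line segments gives $u(y)-u(x)=\int_0^1 V(x+s(y-x))\cdot(y-x)\,ds$ for all $x,y\in\Omega_0$. Applied at $p\in\Phi(\cS)$, where $V(p)=0$ and $|V(\cdot)|\leq C_{1,1}|\cdot-p|$ on a neighborhood by the Lipschitz bound from the previous step, this yields $|u(y)-u(p)|\leq\tfrac{1}{2}C_{1,1}|y-p|^2=o(|y-p|)$, so $u$ is differentiable at $p$ with $\nabla u(p)=0$. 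The main obstacle is the first step: proving $H\to\infty$ at singularities depends on non-collapsing and regularity results for mean convex MCF that are cited rather than developed in the excerpt, while the codim-two path-avoidance used in the Lipschitz step is technical but completely standard.
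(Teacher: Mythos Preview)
Your argument is correct and follows essentially the same route as the paper: bound $|\Hess_u|$ on the regular set via Lemma~\ref{l:C11}, use that the singular set has codimension two to join regular points by paths avoiding it, and deduce a global Lipschitz bound on $\nabla u$. The only noteworthy difference is that the paper obtains $\nabla u=0$ on the singular set from the existence of cylindrical tangent flows (property (4) in Subsection~\ref{ss:31}) rather than your contrapositive regularity argument, and you make explicit the identification of the Lipschitz extension with the classical gradient of $u$, a step the paper leaves implicit.
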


We will need the following standard extension:

\begin{Lem}	\label{l:exte}
Suppose that $\Omega \subset \RR^{n+1}$ is a connected open set with smooth closure
  and $S \subset \Omega$ is closed and has codimension two.  If $f : \Omega \setminus S \to \RR$ is smooth with
$|f| + |\nabla f| \leq C$, then $f$ can be extended continuously to $\Omega$ and the extension is Lipschitz.
\end{Lem}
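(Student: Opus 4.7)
The plan is to show that $f$ is locally Lipschitz on $\Omega \setminus S$ with constant depending only on the given $C$, since standard arguments then produce a unique continuous extension to $\Omega$ that inherits the Lipschitz bound. The starting point is the elementary observation that since $f$ is smooth on $\Omega \setminus S$ with $|\nabla f| \leq C$, for any piecewise smooth path $\gamma \subset \Omega \setminus S$ joining $p$ to $q$ the fundamental theorem of calculus yields
\begin{equation*}
|f(p) - f(q)| \leq C \, \Length (\gamma)\, .
\end{equation*}
The whole content of the lemma therefore lies in the following path construction: for every pair of nearby points $p, q \in \Omega \setminus S$, produce a connecting path in $\Omega \setminus S$ whose length is arbitrarily close to $|p - q|$.

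To construct such a path I would use a codimension-two detour argument. Fix a compact $K \subset \Omega$ and let $p, q \in K \setminus S$ with $d := |p-q|$ much smaller than $\dist(K, \partial \Omega)$. For any $\eta > 0$ (also much smaller than this distance) consider the ball $B_\eta(m)$ around the midpoint $m = (p+q)/2$. The set of $r \in \RR^{n+1}$ for which the straight segment $[p, r]$ meets $S$ is contained in the scaled cone $\{p + t(s-p) : s \in S, \ t \geq 1\}$, whose Hausdorff dimension is at most $\dim S + 1 \leq (n-1) + 1 = n$, and thus has Lebesgue measure zero in $\RR^{n+1}$; the same holds with $q$ in place of $p$. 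Hence almost every $r \in B_\eta(m)$ gives a polygonal detour $p \to r \to q$ lying in $\Omega \setminus S$ of length at most $|p-q| + 2\eta$. Combined with the line-integral bound this yields $|f(p) - f(q)| \leq C(d + 2\eta)$, and since $\eta > 0$ is arbitrary, $|f(p) - f(q)| \leq C\, d$.

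This local Lipschitz estimate on $\Omega \setminus S$ extends across $S$: each $x \in S$ is an accumulation point of $\Omega \setminus S$ (since $S$ has codimension two, its complement is dense), and the Cauchy criterion together with the Lipschitz bound forces the existence of a continuous limit at $x$, producing the extension with the same Lipschitz constant on compact subsets. Since $\Omega$ is bounded with smooth boundary, hence quasi-convex, the local estimate upgrades to a global Lipschitz bound on all of $\Omega$. The only point that requires care is the measure-zero cone argument, and this is exactly where the codimension-two hypothesis enters in an essential way: if $S$ had only codimension one it could locally disconnect $\Omega$ and no such detour would exist. Beyond the dimension bound, no finer structural properties of $S$ are used.
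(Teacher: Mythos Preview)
Your proposal is correct and follows essentially the same approach as the paper's proof: integrate $\nabla f$ along a path in $\Omega\setminus S$ whose length is comparable to $|p-q|$, then extend by continuity. The paper simply asserts that such a path (of length at most $2\,\dist_\Omega(x,y)$) exists because $S$ has codimension two, whereas you supply an explicit polygonal detour via a generic point near the midpoint, using the cone-dimension argument; this extra care even yields the sharp Lipschitz constant $C$ rather than $2C$, and your local-to-global step via quasi-convexity of $\Omega$ is a clean way to handle the passage from nearby points to arbitrary pairs.
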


\begin{proof}
Given points $x , y \in \Omega \setminus S$, we can find a $C^1$ path $\gamma$ from $x$ to $y$ whose length is at most twice the distance (in $\Omega$) from $x$ to $y$ and that avoids $S$
since $S$ has codimension two.  Applying the fundamental theorem of calculus along this path, the bound on $\nabla f$ gives  
\begin{align}
	|f(x) - f(y)| \leq 2 \, C \, \dist_{\Omega} (x,y) \, .
\end{align}
Since $x$ and $y$ were arbitrary,   $f$ is uniformly Lipschitz on $\Omega$.  
  It follows that $f$ has a unique continuous extension to all of $\Omega$ and that this extension is Lipschitz with the same bound.
\end{proof}

\begin{proof}[Proof of Corollary \ref{c:uc1}]
We will first see that each partial derivative $u_k$ is continuous on the entire domain $\Omega$.  Let $S$ be the singular set, so that it is closed, by definition, and 
has finite codimension two measure by \cite{CM4}. 
The function $u_k$ is smooth on $\Omega \setminus S$ with $\nabla u_k$ uniformly bounded   by 
Lemma \ref{l:C11}.  Hence,  by Lemma \ref{l:exte}, $u_k$ is defined on all of $\Omega$ and is Lipschitz.

To see that $\nabla u$ vanishes on the singular set, note that each singularity has a cylindrical tangent flow and, thus, is given as a limit of regular points with $H = |\nabla u|^{-1}$ going to infinity.
Since $|\nabla u|$ is continuous, we must have $|\nabla u| = 0$ at the singular point.

\end{proof}

\section{The second derivatives at critical points}
 
 We now prove the existence of the second derivatives at a critical point and show that
   the   Hessian is given by the tangent flow at the singularity. This depends crucially  on the uniqueness of the tangent flow proven in
    \cite{CM2}; cf. \cite{CIM}.
    
 To keep the notation simple, we will assume that $0$ is a critical point of $u$ and   $u(0) = 0$.  By  \cite{CM2}, there is a unique tangent flow at $0$ which is a multiplicity one cylinder.  After a rotation,  the level set flow corresponding to the blow up cylinder is given by the function 
 \begin{align}
 	w = - \frac{1}{2k} \, \sum_{i=1}^{k+1} x_i^2 \, ,
 \end{align}
and $\nabla w = - \frac{x_+}{k}$, where $x_+ = (x_1 , \dots , x_{k+1} , 0 , \dots , 0)$ is the projection of $x$ onto $\RR^{k+1}$.

 \begin{Pro}	\label{p:oneA}
The function $u$ is twice differentiable at $0$ and   $\Hess_u = \Hess_w$ at $0$.
\end{Pro}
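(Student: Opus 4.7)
The plan is to parabolically rescale $u$ at the critical point $0$ and identify the limit with $w$ via the uniqueness of the tangent flow from \cite{CM2}. For $\lambda>0$, define $u^\lambda(x) = \lambda^{-2}\,u(\lambda x)$. Since $u\in C^{1,1}$ by Corollary \ref{c:uc1}, with $u(0)=0$ and $\nabla u(0)=0$, we have uniform bounds $|u^\lambda(x)|\leq C\,|x|^2$ and $|\nabla u^\lambda(x)|\leq C\,|x|$ on any compact subset of $\RR^{n+1}$, independent of $\lambda$. The proposition reduces to showing that $u^\lambda\to w$ uniformly on compact sets as $\lambda\to 0$: unpacking the rescaling, $\sup_{|x|\leq R}|u^\lambda(x)-w(x)|\to 0$ translates to $u(y)-w(y)=o(|y|^2)$ as $y\to 0$, which, given $\nabla u(0)=0$ and the fact that $w$ is a homogeneous quadratic, is precisely twice differentiability at $0$ with $\Hess_u(0)=\Hess_w(0)$.

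First, by Arzel\`a--Ascoli, any sequence $\lambda_j\to 0$ has a subsequence along which $u^{\lambda_j}\to u^*$ in $C^0_{\text{loc}}(\RR^{n+1})$. Since the equation \eqr{e:elliptic}, $\Delta_1 u=-1$, is invariant under parabolic rescaling, each $u^\lambda$ is a viscosity solution on its (expanding) domain, and by standard stability of viscosity solutions under uniform limits (see \cite{CrIL}), the limit $u^*$ is a viscosity solution on all of $\RR^{n+1}$.

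Next, identify $u^*$ with $w$. The level sets $\{u^\lambda=t\}=\lambda^{-1}M_{\lambda^2 t}$ are precisely the parabolic rescalings of the MCF $M_t$ at the spacetime point $(0,0)$. By the uniqueness of the tangent flow at the critical point $0$ proved in \cite{CM2}, these rescaled flows converge, as Brakke flows and level set flows, to the shrinking generalized cylinder whose arrival time function is exactly $w$. Thus the level sets of $u^*$ coincide with those of $w$. Since both functions are continuous on $\RR^{n+1}$ and the mean convex level set flow is non-fattening (Subsection \ref{ss:31}), the arrival time is determined uniquely by its level sets, so $u^*=w$. As every subsequential limit equals $w$, the entire family converges, yielding the desired uniform convergence on compact sets.

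The main obstacle is the identification $u^*=w$: one must bridge the gap between geometric convergence of the flows (guaranteed by \cite{CM2}) and functional convergence of the arrival times. The uniform $C^{1,1}$ bounds from Lemma \ref{l:C11} and Corollary \ref{c:uc1} supply the compactness to extract a limiting viscosity solution, but pinning it down as exactly $w$, rather than some other viscosity solution with the same level sets, requires the non-fattening of the limiting cylindrical flow together with the observation that an arrival time function is determined by its monotone family of level sets.
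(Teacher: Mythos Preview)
Your approach is correct and genuinely different from the paper's, though both rest on the same two ingredients: uniqueness of the cylindrical tangent flow from \cite{CM2} and the uniform $C^{1,1}$ bound. The paper works pointwise on the difference quotients $\nabla u(x_i)/|x_i|$: using $\nn=\nabla u/|\nabla u|$ and $H=|\nabla u|^{-1}$ together with the smooth convergence of rescaled level sets (Lemma~\ref{l:maincase}), it reads off the limit for directions with $v_+\neq 0$, and then handles the axis direction by an $\epsilon$-perturbation combined with the $C^{1,1}$ bound. Your argument is softer: compactness of the $u^\lambda$ plus identification of every subsequential limit with $w$. The viscosity/non-fattening detour is superfluous, though: once $\{u^\lambda=t\}\to\{w=t\}$ on compact sets for each $t<0$, the uniform Lipschitz bound on $u^\lambda$ gives $u^\lambda\to w$ pointwise on the dense set $\{x_+\neq 0\}$, hence $u^*=w$ there, hence everywhere by continuity.

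One point to tighten: what you establish is $u(y)-w(y)=o(|y|^2)$, the Peano second-order expansion. The paper proves the stronger statement that $\nabla u$ is differentiable at $0$, i.e.\ $\nabla u(y)-\nabla w(y)=o(|y|)$; these are not equivalent in general, even under $C^{1,1}$. You can recover this at no extra cost: the $C^{1,1}$ bound makes the $\nabla u^\lambda$ uniformly Lipschitz, so Arzel\`a--Ascoli already yields $u^{\lambda_j}\to w$ in $C^1_{\rm loc}$, not merely $C^0_{\rm loc}$, and unwinding the rescaling gives $\nabla u(y)=\nabla w(y)+o(|y|)$.
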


To understand this,  rescale the flow by a factor $\alpha > 0$ so that the new arrival time becomes
\begin{align}
	u_{\alpha} (x) = \alpha^{-2} \, u (\alpha x ) \, .
\end{align}
This rescaling preserves the Hessian at $0$ (assuming it exists) and, as $\alpha$ goes to $0$, the rescaling converges (in a sense to be made precise) to the tangent flow at $0$.

\vskip2mm
We will prove that the difference quotients of $\nabla u$ have a limit.  We will consider two cases, depending on how the points approach $0$.  The main case is   the following:

\begin{Lem}	\label{l:maincase}
If $x_i$ is any sequence in $\RR^{n+1}$ with $u(x_i) < 0$, $x_i \to 0$, $ \frac{x_i}{|x_i|} \to  v$,  and
\begin{align}	\label{e:coneregion}
	\limsup_{i \to \infty} \, \frac{|x_i|^2}{|u(x_i)|} < \infty \, , 
\end{align}
then $\lim_{i \to \infty} \frac{\nabla u (x_i)}{|x_i|}$ exists and is equal to $\partial_{v} \, \nabla w$.
\end{Lem}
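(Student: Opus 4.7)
The plan is to rescale at the critical point. Set $\alpha_i = |x_i|$ and $y_i = x_i/\alpha_i$, so $|y_i|=1$ and $y_i \to v$. The chain rule applied to the rescaled arrival time $u_{\alpha_i}(y) = \alpha_i^{-2}\, u(\alpha_i y)$ gives
\begin{align*}
\nabla u_{\alpha_i}(y_i) \,=\, \alpha_i^{-1}\, \nabla u(x_i) \,=\, \frac{\nabla u(x_i)}{|x_i|}\, .
\end{align*}
Since $\nabla w(x) = -x_+/k$ is linear, $\partial_v \nabla w = \nabla w(v) = -v_+/k$, so the lemma reduces to showing $\nabla u_{\alpha_i}(y_i) \to \nabla w(v)$.

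The second step is convergence to the tangent flow. The rescaled arrival times $u_{\alpha_i}$ satisfy the same level set equation \eqr{e:levelset} (which is scale invariant), and the tangent flow at $0$ is a multiplicity-one cylinder by property (4) of the level set flow. By the uniqueness theorem of \cite{CM2}, the rescaled mean convex flows $\alpha_i^{-1}\, M_{\alpha_i^2 t}$ converge to the shrinking cylinder flow, smoothly on compact subsets of $\RR^{n+1} \setminus (\{0\}\times \RR^{n-k})$. Translating this to arrival times, $u_{\alpha_i} \to w$ locally uniformly on $\RR^{n+1}$, with $C^\infty$ convergence on any compact region avoiding the axis $\{y_+ = 0\}$.

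Condition \eqr{e:coneregion} forces $v$ to lie off the axis. Indeed, pick $c_0 > 0$ with $|u(x_i)| \geq c_0 |x_i|^2$ for $i$ large; then $|u_{\alpha_i}(y_i)| = \alpha_i^{-2} |u(x_i)| \geq c_0$. Since $u_{\alpha_i}(y_i) < 0$ and $u_{\alpha_i} \to w$ uniformly near $v$, the limit satisfies $w(v) \leq -c_0$, i.e., $|v_+|^2 \geq 2k c_0 > 0$. Hence $v$ lies in the regular part of the cylinder, where $\nabla w$ is smooth and nonvanishing, and the $C^\infty$ convergence of $u_{\alpha_i}$ to $w$ holds on a fixed neighborhood $U$ of $v$. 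Since $y_i \to v \in U$, this yields $\nabla u_{\alpha_i}(y_i) \to \nabla w(v)$, completing the proof.

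The main obstacle is upgrading $C^0$ convergence of the rescaled arrival times to $C^1$ convergence of their gradients on the neighborhood $U$. The uniqueness theorem \cite{CM2} delivers smooth convergence of the rescaled mean-convex flows on compact regular regions, and one must transfer this into uniform smoothness of the associated arrival times. Here the Haslhofer-Kleiner bound \eqr{t:HaK} and the $C^{1,1}$ estimate of Lemma \ref{l:C11} are the key inputs: both are scale invariant and so give uniform bounds on the rescalings in terms of the noncollapsing data of $M_0$. Since $|\nabla w|$ is bounded uniformly away from zero on $U$, $H = |\nabla u_{\alpha_i}|^{-1}$ remains uniformly bounded on $U$ for $i$ large, ruling out new singularities of $u_{\alpha_i}$ in $U$ and yielding the required smooth convergence.
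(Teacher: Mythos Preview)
Your proof is correct but takes a different route from the paper. The paper rescales by the \emph{temporal} factor $\sqrt{-u(x_i)}$, so that the single level set $\{u=u(x_i)\}/\sqrt{-u(x_i)}$ converges smoothly to the cylinder $\Sigma_k$; condition~\eqref{e:coneregion} then guarantees that the points $y_i=x_i/\sqrt{-u(x_i)}$ stay bounded. The paper reads off the unit normal $\nabla u/|\nabla u|$ and mean curvature $|\nabla u|^{-1}$ directly from the smooth hypersurface convergence at (a subsequential limit of) the $y_i$, and combines these two limits algebraically to obtain $\nabla u(x_i)/|x_i|\to -v_+/k$. No convergence of the rescaled arrival times as functions is invoked.

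Your approach instead rescales spatially by $\alpha_i=|x_i|$ and argues that the rescaled arrival times $u_{\alpha_i}$ converge to $w$ in $C^1$ near $v$, using \eqref{e:coneregion} to place $v$ off the axis. This is more ``function-theoretic'' and buys you something: your final paragraph is over-engineered, since the scale-invariant $C^{1,1}$ bound of Lemma~\ref{l:C11} (extended across the singular set by Corollary~\ref{c:uc1}) already gives uniform Lipschitz control on $\nabla u_{\alpha_i}$, so Arzel\`a--Ascoli upgrades $C^0$ convergence $u_{\alpha_i}\to w$ to $C^1$ convergence on \emph{all} of $\RR^{n+1}$, not just away from the axis. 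In particular your route, once streamlined, does not actually need the hypothesis~\eqref{e:coneregion} at all, and would prove the lemma for every direction $v$ at once---thereby subsuming the two-case argument the paper uses for Proposition~\ref{p:oneA}. The paper's computation is shorter and more geometric, but yours is in this sense more robust.
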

 
 \begin{proof}
 Since $u(x_i) < 0$ and $x_i \to 0$,   \cite{CM2} implies that the rescaled level sets
 \begin{align}	
 	\frac{ \{ u = u(x_i) \} }{\sqrt{-u(x_i)}} \,  
 \end{align}
 converge smoothly on compact sets to the cylinder $\Sigma_k = \{ x_1^2 + \dots + x_{k+1}^2 = 2k \} \subset \RR^{n+1}$.
 The condition \eqr{e:coneregion} implies that the points
 \begin{align}	\label{e:yiy}
 	y_i \equiv \frac{x_i}{\sqrt{-u(x_i)}}
 \end{align}
 lie in a bounded set. Choose a subsequence (still denoted by $y_i$) that converges to a limiting point $y \in \RR^{n+1}$.  Write $y$ as $y = (y_+ , y_-)$, where $y_+ \in \SS^{k}_{\sqrt{2k}}$ is the projection of $y$ on the first $k+1$ coordinates and $y_- \in \RR^{n-k}$ is the orthogonal part.   At $y_i$, the normal to the rescaled level set   and the mean curvature  are given by
 \begin{align}
 	\frac{\nabla u (x_i)}{|\nabla u (x_i)|} \,  {\text{ and }} \,  \frac{\sqrt{-u(x_i)}}{  |\nabla u (x_i)| } \, .
 \end{align}
Since these are converging  to the same quantities for the limit $\Sigma_k$ at $y$, we see that
\begin{align}
	\frac{\nabla u (x_i)}{|\nabla u (x_i)|} \to  \frac{-y_+}{\sqrt{2k} } \, {\text{ and }} \,
	\frac{\sqrt{-u(x_i)}}{  |\nabla u (x_i)| } \to \frac{\sqrt{k}}{\sqrt{2}}  
	\, .
\end{align}
Combining \eqr{e:yiy} with the two previous limits, we see that
\begin{align}	\label{e:diffq1}
	 \frac{\nabla u (x_i)}{|x_i|} = \frac{\nabla u (x_i)}{|\nabla u (x_i)|} \, \left(  \frac{\sqrt{-u(x_i)}}{  |\nabla u (x_i)| } \right)^{-1} \, |y_i|^{-1}
	 \to \frac{-y_+}{k \, |y|} = \frac{-v_+}{k} \, , 
\end{align}
where the last equality used that  $ \frac{x_i}{|x_i|} \to  v$ and $v_+$ is the projection of $v$ onto $\RR^{k+1}$.

This limit is independent of the choice of subsequence and, thus, the limit of the difference quotients exists and is given by 
\eqr{e:diffq1}.  Finally, we observe that this is equal to $\partial_{v} \, \nabla w$.
 
 \end{proof}

The previous lemma gives the Hessian, but requires that we approach $0$ in  a direction where $u< 0$.  By the next lemma, this covers every direction except along the axis. 

\begin{Lem}	\label{l:approach}
Given   $\epsilon > 0$, there exist $C , \delta > 0$ so that if $x = (x_+ , x_-) \in \RR^{k+1} \times \RR^{n-k}$ has
\begin{align}
	|x_+| \geq \epsilon \, |x_-| {\text{ and }} |x| \leq \delta \, , 
\end{align}
then $|x|^2 \leq -   C\, u (x)$.
\end{Lem}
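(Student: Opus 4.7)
The plan is to argue by contradiction, combining a parabolic rescaling at $0$ with the uniqueness of the tangent flow established in \cite{CM2}. The content of the lemma is essentially that $u(x)$ is controlled from below by the tangent cylinder value $w(x) = -\frac{|x_+|^2}{2k}$ in any uniform cone $\{|x_+| \geq \epsilon |x_-|\}$ about the non-axis directions, so a failure of the inequality would force the rescalings of $u$ to have a different limit than $w$ at some non-axis point of the unit sphere, contradicting the tangent flow uniqueness.

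Suppose no such $C, \delta$ exist. Then one can find a sequence $x_i \in \RR^{k+1} \times \RR^{n-k}$ with $x_i \to 0$, $|(x_i)_+| \geq \epsilon \, |(x_i)_-|$, and $\frac{-u(x_i)}{|x_i|^2} \to 0$. Set $\alpha_i = |x_i|$ and $\tilde x_i = x_i/\alpha_i$; then $|\tilde x_i| = 1$, the cone condition forces $|(\tilde x_i)_+| \geq \epsilon/\sqrt{1+\epsilon^2}$, and, passing to a subsequence, $\tilde x_i \to \tilde x_\infty$ with $|(\tilde x_\infty)_+| > 0$. Consider the parabolic rescalings $u_{\alpha_i}(x) = \alpha_i^{-2} \, u(\alpha_i x)$, whose level sets are the rescaled MCFs $\alpha_i^{-1} M_t$ after the usual time reparametrization.

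By the tangent flow uniqueness theorem of \cite{CM2}, the flows $\alpha_i^{-1} M_t$ converge smoothly on compact subsets of $\RR^{n+1} \setminus (\{0\} \times \RR^{n-k})$ to the shrinking cylinder $\Sigma_k$, i.e.\ to the level sets of $w$. The limit point $\tilde x_\infty$ lies off the axis, where $|\nabla w| > 0$, so by the implicit function theorem (applied to the smooth defining functions of the level sets, which are uniformly transverse) the functions $u_{\alpha_i}$ converge smoothly to $w$ on a neighborhood of $\tilde x_\infty$. Evaluating at $\tilde x_i$ then gives
\begin{equation}
\frac{u(x_i)}{|x_i|^2} \;=\; u_{\alpha_i}(\tilde x_i) \;\longrightarrow\; w(\tilde x_\infty) \;=\; -\frac{|(\tilde x_\infty)_+|^2}{2k} \;<\; 0 \, ,
\end{equation}
contradicting $\frac{-u(x_i)}{|x_i|^2} \to 0$.

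The main obstacle is the last step of the previous paragraph: translating the smooth convergence of the space--time rescaled hypersurfaces to the cylinder into smooth (or even pointwise) convergence of the arrival time functions $u_{\alpha_i}$ near a chosen non-axis point. This is exactly where non-degeneracy matters; on the axis $w \equiv 0$ and $\nabla w = 0$, so no such statement can hold, but off the axis the uniform lower bound on $|\nabla w|$ (equivalently, the uniform upper bound on the mean curvature of the limit cylinder) lets one solve for $u_{\alpha_i}$ from its level sets via the implicit function theorem, with estimates that are uniform in $i$ once we restrict to a small fixed neighborhood of $\tilde x_\infty$ bounded away from the axis.
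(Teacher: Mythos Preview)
Your argument is correct and is precisely the natural way to unpack the paper's one-line proof, which reads in its entirety ``This follows from the uniqueness of \cite{CM2}.''  Both rely on the same input---uniqueness of the cylindrical tangent flow---and your contradiction via the parabolic rescalings $u_{\alpha_i}$ is exactly how that input yields the stated bound.
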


\begin{proof}
This follows  from the uniqueness of \cite{CM2}.
\end{proof}
 
\begin{proof}[Proof of Proposition \ref{p:oneA}]
Fix a unit vector $v = (v_+ , v_-)$ and let $\delta_i > 0$ be a sequence converging to $0$. 
We will consider two cases.

Suppose first that 
  $|v_+| > 0$.   Lemma \ref{l:approach} implies that the sequence $x_i \equiv \delta_ i \, v$ (for $i$ large enough) satisfies the hypotheses of 
 Lemma \ref{l:maincase}.  Therefore, we conclude that 
 \begin{align}
 	\lim_{i \to \infty} \, \frac{ \nabla u(\delta_i \, v) }{\delta_i} = \frac{ - v_+ }{k} = \partial_v \, \nabla w \, .
 \end{align}
 
 Suppose next that $v_+ = 0$.  Given some  small $\epsilon > 0$,
 Lemma \ref{l:approach} implies that the sequence 
 \begin{align}
 	x_i \equiv   \delta_ i \, v + (\epsilon \, \delta_i , 0 , \dots , 0) 
\end{align}
 satisfies the hypotheses of 
 Lemma \ref{l:maincase} (for $i$ large enough).
 Therefore, we conclude that 
 \begin{align}	\label{e:secc}
 	\lim_{i \to \infty} \, \frac{ \nabla u(x_i) }{\delta_i} = \frac{ - (\epsilon , 0 , \dots , 0) }{k \sqrt{1+\epsilon}}   \, .
 \end{align}
  To relate this to the difference quotients in the direction $v$,  use the   $C^{1,1}$ bound  to get
  \begin{align}	\label{e:secd}
  	\left| \frac{\nabla u (x_i)}{\delta_i} -  \frac{\nabla u (\delta_i \, v ) }{\delta_i} \right| \leq \frac{  \left| \nabla u (x_i) - \nabla u (\delta_i \, v ) \right|}{\delta_i} \leq C \, \epsilon  \,  .
  \end{align}
Finally, since $\epsilon > 0$ is arbitrary, combining \eqr{e:secc} and \eqr{e:secd} gives
  \begin{align}
  	\lim_{i \to \infty} \, \frac{ \nabla u(\delta_i \, v ) }{\delta_i} = 0 = \partial_v \nabla w \, .
  \end{align}
\end{proof}

\section{Solving the equation classically}		\label{s:classical}
 
 Evans and Spruck (section $7.3$ in \cite{ES1}; cf. \cite{ChGG}) constructed a continuous 
 viscosity solution $u$ of \eqr{e:elliptic}  and showed   that it is   unique   and Lipschitz.
	Recall that $u$ is a viscosity solution if it is both
  a sub and super solution.  A  continuous function $u$ is a sub solution (super solutions are defined similarly) provided that:
	\begin{quote}
		If $\phi$ is a smooth function so that $u-\phi$ has a local maximum at $x_0$, then 
		\begin{enumerate}
		\item $\Delta_1 \, \phi \equiv \Delta\,\phi- \Hess_{\phi} \left(  \frac{\nabla \phi }{|\nabla \phi|} , \frac{\nabla \phi}{|\nabla \phi|}
	\right)  \geq -1$ at $x_0$ if $\nabla \phi (x_0) \ne 0$.
		\item $\Delta\,\phi- \Hess_{\phi} \left(  v , v
	\right)  \geq -1$ at $x_0$ for some vector $|v| \leq 1$ if $\nabla \phi (x_0)= 0$.
		\end{enumerate}
	\end{quote}
 
  We will say that a twice differentiable function $u$ is a classical solution of \eqr{e:elliptic} if:   
\begin{enumerate}
\item[(A)]  $\Delta_1 \, u \equiv \Delta\,u- \Hess_{u} \left(  \frac{\nabla u}{|\nabla u|} , \frac{\nabla u}{|\nabla u|}
	\right)  = -1$  where  $\nabla u \ne 0$.
		\item[(B)] $\Delta\,u- \Hess_{u} \left(  v , v
	\right)  = -1$ at $x_0$ for some vector $|v| = 1$ if $\nabla u (x_0)= 0$.
\end{enumerate}

\begin{Lem}	\label{l:consist}
If $w$   solves  \eqr{e:elliptic} classically, then $w$ is also a viscosity solution.
\end{Lem}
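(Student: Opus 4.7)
The plan is to verify the sub-solution and super-solution conditions separately, relying on two standard ingredients: the first and second order comparison $\nabla w(x_0) = \nabla \phi(x_0)$ and $\pm(\Hess_\phi(x_0) - \Hess_w(x_0)) \geq 0$ at a local max/min of $w-\phi$, and the elementary linear-algebra fact that $\text{tr}(AB) \geq 0$ for positive semidefinite symmetric matrices $A$, $B$.  The unifying observation is that both $\Delta_1$ and its degenerate substitute $\Delta - \Hess(v,v)$ are traces of the Hessian against a PSD projection-type matrix ($I - \hat p \hat p^{T}$ at regular points of $\phi$, $I - v_0 v_0^{T}$ at critical points), hence both are monotone in the L\"owner order on the Hessian.

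To check the sub-solution property, I will take any smooth $\phi$ with $w-\phi$ having a local maximum at $x_0$, so that $\Hess_\phi(x_0) - \Hess_w(x_0) \geq 0$.  If $\nabla \phi(x_0) \ne 0$, then $\nabla w(x_0) = \nabla \phi(x_0) \ne 0$, so (A) gives $\Delta_1 w(x_0) = -1$, and the identity
\begin{align*}
	\Delta_1 \phi - \Delta_1 w = \text{tr}\bigl((I - \hat p \hat p^{T})(\Hess_\phi - \Hess_w)\bigr) \geq 0
\end{align*}
immediately yields $\Delta_1 \phi(x_0) \geq -1$.  If $\nabla \phi(x_0) = 0$, then $\nabla w(x_0) = 0$, so (B) supplies a unit vector $v_0$ with $\Delta w - \Hess_w(v_0,v_0) = -1$ at $x_0$; since $|v_0| = 1 \leq 1$, $v_0$ is admissible in the sub-solution test, and the analogous trace identity with $I - v_0 v_0^{T}$ (which is PSD) produces $\Delta \phi(x_0) - \Hess_\phi(v_0,v_0) \geq -1$.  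The super-solution verification is the mirror image: $w-\phi$ having a local minimum gives the reversed matrix inequality $\Hess_w(x_0) - \Hess_\phi(x_0) \geq 0$, and the same two trace computations (re-using the vector $v_0$ from (B) in the critical case) deliver $\Delta_1 \phi \leq -1$ and $\Delta \phi - \Hess_\phi(v_0,v_0) \leq -1$.

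I expect no real obstacle; the proof is a short exercise in the linear algebra of symmetric matrices.  The only point worth noting is that the vector $v_0$ furnished by condition (B) is intrinsic to $w$ and is admissible in both the sub- and super-solution tests, and that the projection matrices $I - \hat p \hat p^{T}$ and $I - v_0 v_0^{T}$ are PSD, which is precisely what turns the trace identities into inequalities with the correct sign in each case.
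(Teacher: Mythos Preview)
Your proof is correct and follows essentially the same approach as the paper: first-derivative matching at the extremum, the second-derivative test $\Hess_w \leq \Hess_\phi$ (resp.\ $\geq$), and then tracing against the projection onto the hyperplane orthogonal to the common gradient (or to the unit vector $v_0$ from (B) in the critical case). The only cosmetic difference is that you phrase the last step via the trace inequality $\operatorname{tr}(AB)\geq 0$ for PSD matrices, whereas the paper speaks of ``taking traces over the same $n$-dimensional subspace''; these are the same computation.
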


\begin{proof}
We must show that $w$ is both a sub and super solution.  To see that $w$ is a sub solution, suppose   $\phi$ is smooth and $w-\phi$ has a local maximum at an interior point $x_0$,
so  
\begin{align}	\label{e:firdt}
	\nabla w (x_0) = \nabla \phi (x_0) \, .
\end{align}
The second derivative test for twice differentiable functions ($11$ on page $115$ of \cite{R}) gives
\begin{align}	\label{e:2dt}
	\Hess_w (v ,v) \leq \Hess_{\phi}(v,v) {\text{ at }} x_0 {\text{ for any vector }} v \in \RR^{n+1} \, .
\end{align}
If $\nabla \phi \ne 0$, then \eqr{e:firdt} implies that we are taking traces over the same $n$-dimensional subspaces in (1) and (A); the inequality \eqr{e:2dt} then gives that 
(A) implies (1).
In the other case where $\nabla \phi = 0$, then let $v$ be the same unit vector in (1) as in (B).  Taking the trace over $v^{\perp}$, we see that (B) and \eqr{e:2dt}  imply (2).  We conclude that $w$ is a sub solution.
The proof that $w$ is a super solution follows similarly.
\end{proof}

The next lemma shows that spheres and cylinders give classical solutions to  \eqr{e:elliptic}.

\begin{Lem}	\label{l:spherecyl}
If $ \nabla u(0)=0$ and $\Hess_u$ exists at $0$ and equals $\Hess_w$ where 
  $w = - \frac{1}{2k} \, \sum_{i=1}^{k+1} x_i^2$, then $u$  is a classical solution to \eqr{e:elliptic} at $0$.
\end{Lem}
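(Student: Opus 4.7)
The plan is to verify condition (B) of the classical-solution definition by a direct computation, since condition (A) is vacuous at $0$ (we have $\nabla u(0)=0$ by hypothesis).

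First I would compute the model Hessian explicitly. From $w=-\frac{1}{2k}\sum_{i=1}^{k+1}x_i^2$ we have $\partial_i w = -x_i/k$ for $i\leq k+1$ and $0$ otherwise, so $\Hess_w(0)$ is the diagonal matrix with $-1/k$ repeated $k+1$ times and $0$ repeated $n-k$ times. By hypothesis $\Hess_u(0)=\Hess_w(0)$, so in particular
\begin{align}
\Delta u(0)=\Delta w(0)=-\frac{k+1}{k}.
\end{align}

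Next I would exhibit a vector $v$ for which (B) holds. Pick any unit vector in the span of the collapsing directions, for instance $v=e_1$. Then $\Hess_u(v,v)(0)=\Hess_w(e_1,e_1)=-1/k$, so
\begin{align}
\Delta u(0)-\Hess_u(v,v)(0) \;=\; -\frac{k+1}{k}+\frac{1}{k} \;=\; -1,
\end{align}
which is exactly condition (B). Hence $u$ is a classical solution of \eqref{e:elliptic} at $0$.

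There is no real obstacle here: the lemma reduces to the observation that the cylinder model $w$ itself satisfies \eqref{e:elliptic} in the classical sense at the axis (any unit vector along the collapsing factor works as $v$), together with the fact that equality of $\Hess_u$ and $\Hess_w$ at $0$ transfers this identity verbatim to $u$. The only mild point worth flagging is that we are free to choose the witnessing vector $v$ from the $(k+1)$-dimensional eigenspace of $-1/k$, so the choice $v=e_1$ is not canonical but any such choice works.
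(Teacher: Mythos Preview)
Your proof is correct and follows essentially the same approach as the paper: identify the eigenvalues of $\Hess_u(0)=\Hess_w(0)$ and verify (B) by choosing $v$ in the $-\frac{1}{k}$ eigenspace. The paper additionally separates out the spherical case $k=n$ (where every unit vector works), but your unified computation already covers it.
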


\begin{proof}
If $k=n$ (the spherical case), then $\Hess_u (0)$ is diagonal with all $n+1$ eigenvalues equal to $- \frac{1}{n}$.  In this case, (B) holds at $0$ for any unit vector $v$.

When $k< n$, then $\Hess_u (0)$   has $k+1$ eigenvalues equal to $- \frac{1}{k}$ and $n-k$ zero eigenvalues.  In this case, (B) holds at $0$ for any unit vector $v$ in the $- \frac{1}{k}$ 
eigenspace.
\end{proof}

\vskip2mm
We get the following immediate consequence of Proposition \ref{p:oneA} and Lemma \ref{l:spherecyl}:

\begin{Cor}
The  viscosity solution is a classical solution.
\end{Cor}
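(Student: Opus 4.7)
The plan is simply to verify conditions (A) and (B) from the definition of classical solution at every point of the domain, since everything needed has just been assembled.

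At a point $x_0$ with $\nabla u (x_0) \neq 0$, Corollary \ref{c:uc1} tells us that the critical set coincides with the singular set, so $x_0$ lies on the regular part of the flow. There $u$ is smooth and the level set $\{u = u(x_0)\}$ is an embedded hypersurface moving by MCF. The computation carried out in the derivation of \eqref{e:elliptic} (working tangentially to the level set and using $H = |\nabla u|^{-1}$) gives
\begin{equation*}
\Delta_1 u (x_0) \;=\; \Delta u(x_0) - \Hess_u\!\left( \tfrac{\nabla u}{|\nabla u|}, \tfrac{\nabla u}{|\nabla u|} \right)(x_0) \;=\; -1,
\end{equation*}
which is precisely (A).

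At a critical point $x_0$, translate so that $x_0 = 0$ and $u(0) = 0$. Proposition \ref{p:oneA} applies: there exists $k \in \{1, \dots, n\}$ and (after a rotation of $\RR^{n+1}$) an identification $\Hess_u(0) = \Hess_w(0)$, where $w = -\tfrac{1}{2k}\sum_{i=1}^{k+1} x_i^2$ is the arrival time of the unique tangent cylinder at $0$ furnished by \cite{CM2}. Lemma \ref{l:spherecyl} then produces a unit vector $v$ (any vector in the $-\tfrac{1}{k}$-eigenspace) for which
\begin{equation*}
\Delta u (0) - \Hess_u(v,v)(0) \;=\; \Delta w(0) - \Hess_w(v,v)(0) \;=\; -1,
\end{equation*}
verifying (B). So $u$ satisfies (A) and (B) at every point, i.e.\ it is a classical solution. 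Together with Lemma \ref{l:consist} (classical solutions are viscosity solutions) and the uniqueness of the viscosity solution recalled at the start of this section, the twice-differentiable function $u$ we have been studying is the viscosity solution and solves \eqref{e:elliptic} classically.

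There is no real obstacle left: Proposition \ref{p:oneA} already does the serious analytic work (twice differentiability at critical points and identification of the Hessian with that of the tangent cylinder), Lemma \ref{l:spherecyl} handles the algebraic check that the cylindrical Hessian meets (B), and the smooth regime is covered by the derivation of the PDE on the regular set. The corollary is therefore just the packaging of these ingredients.
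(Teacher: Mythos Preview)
Your proof is correct and follows essentially the same approach as the paper: use Corollary~\ref{c:uc1} to reduce to the critical set, then invoke Proposition~\ref{p:oneA} and Lemma~\ref{l:spherecyl} to verify (B) there. The final appeal to Lemma~\ref{l:consist} and uniqueness is harmless but unnecessary, since the $u$ under discussion is already the Evans--Spruck viscosity solution by construction.
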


\begin{proof}
By Corollary \ref{c:uc1},
the only thing to check is on the critical set.  However, Proposition \ref{p:oneA} gives   the Hessian  there and this satisfies (B) by
Lemma \ref{l:spherecyl}.
\end{proof}

This completes the proof of Theorem \ref{t:mainarrival}.

\section{Vanishing order along the axis}

The next theorem shows that  $\nabla u$ vanishes faster than linearly in the direction of the axis for a cylindrical singularity $\SS^k \times \RR^{n-k}$ (the $\RR^{n-k}$ factor is the ``axis'').

\begin{Thm}	\label{t:axidi}
If $u$ has a cylindrical singularity at $0$ and $v$ is any unit vector in the direction of the axis, then for some $\alpha>0$
\begin{align}
	\lim_{\delta  \to 0} \, \, \frac{ ( \log |\log \delta| )^{\alpha} \, |\nabla u (\delta \,v)|}{\delta }  = 0 \, .
\end{align}
\end{Thm}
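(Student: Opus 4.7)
The plan is to use the uniqueness and quantitative rate of convergence of the rescaled mean curvature flow to the cylinder $\Sigma_k$ from \cite{CM2} to upgrade the a priori estimate $|\nabla u(\delta v)| = o(\delta)$ (which follows from $\Hess_u(0)\cdot v = 0$ in Proposition \ref{p:oneA}) to the claimed logarithmic decay.

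For $\lambda > 0$ small, set $u_\lambda(x) = \lambda^{-2}\,u(\lambda x)$; the level sets of $u_\lambda$ correspond to the parabolically rescaled flow $\tilde M_\tau$ at rescaled time $\tau = 2|\log\lambda|$.  By \cite{CM2}, $u_\lambda \to w := -\frac{1}{2k}\sum_{i=1}^{k+1} x_i^2$ as $\lambda \to 0$; combining the \L{}ojasiewicz--Simon inequality from \cite{CM2} with interior Schauder estimates and the Haslhofer--Kleiner bounds \eqr{t:HaK} gives a quantitative $C^2$ rate
\[ \|u_\lambda - w\|_{C^2(K)} \leq C_K\,|\log\lambda|^{-\beta}\]
for some $\beta > 0$, on compact subsets $K \subset \RR^{n+1}\setminus\{0\}$ avoiding the axis.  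Since $\Hess_{u_\lambda}(x) = \Hess_u(\lambda x)$, $\Hess_w$ is constant, and $\Hess_w\cdot v = 0$ for the axis direction $v$, setting $y = \lambda x$ gives, for $y$ small and in any fixed cone transverse to the axis,
\[ |\Hess_u(y)\cdot v| \leq C\,\bigl(\log(1/|y|)\bigr)^{-\beta}.\]

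Since $u \in C^{1,1}$ by Corollary \ref{c:uc1}, $\nabla u$ is Lipschitz.  For small $\eps > 0$, I integrate along the perturbed path $\gamma(t) = tv + \eps\,t\,e$ (with $e$ a non-axis unit direction), which stays in a transverse cone of half-angle $\sim \eps$.  Using $\nabla u(0) = 0$,
\[ |\nabla u(\gamma(\delta))| \leq \int_0^\delta |\Hess_u(\gamma(t))\,\gamma'(t)|\,dt \leq C(\eps)\,\delta\,|\log\delta|^{-\beta} + C_{1,1}\,\eps\,\delta,\]
where the $C^{1,1}$ bound controls the transverse contribution.  By the Lipschitz estimate on $\nabla u$, $|\nabla u(\delta v) - \nabla u(\gamma(\delta))| \leq C_{1,1}\,\eps\,\delta$.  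Optimizing over $\eps$ yields $|\nabla u(\delta v)| \leq C\,\delta\,|\log\delta|^{-\beta'}$ for some $\beta' > 0$.  Since $|\log\delta|^{-\beta'}$ decays strictly faster than $(\log|\log\delta|)^{-\alpha}$ for any $\alpha > 0$, the theorem follows with, for example, $\alpha = \beta'/2$.

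The main obstacle is the quantitative $C^2$ convergence rate $|\log\lambda|^{-\beta}$ uniformly in cones of shrinking angle around the axis.  The \L{}ojasiewicz--Simon inequality of \cite{CM2} gives polynomial decay in $\tau$ for $\tilde M_\tau$ in a weak norm; upgrading this to $C^2$ uniformly near the axis (where $w$ has its critical set and $\tilde M_\tau$ may fail to be graphical over $\Sigma_k$) requires careful interpolation with the smoothness bounds \eqr{t:HaK}, together with the uniform applicability of Proposition \ref{p:oneA} at nearby axis critical points.
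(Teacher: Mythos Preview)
Your argument has a genuine gap: the convergence rate you invoke from \cite{CM2} is too strong. The \L{}ojasiewicz--Simon machinery there does \emph{not} yield $\|u_\lambda - w\|_{C^2(K)} \leq C_K\,|\log\lambda|^{-\beta}$, i.e.\ polynomial decay in the rescaled time $\tau\sim|\log\lambda|$. What \cite{CM2} actually gives (theorem~6.9 and lemma~6.1) is that the rescaled flow $\Sigma_s$ is graphical over the cylinder in a ball of radius $\sim\sqrt{\log s}$ with $C^1$ norm $\lesssim(\log s)^{-1/2}$; in your variables this is a rate of order $(\log|\log\lambda|)^{-1/2}$, not $|\log\lambda|^{-\beta}$. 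This is no accident: the double logarithm in the statement of Theorem~\ref{t:axidi} is exactly the rate inherited from \cite{CM2}. Your closing remark that you have proven something strictly stronger than the theorem is thus a symptom of the overclaimed rate, not a bonus.

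The second issue, which you yourself flag as the ``main obstacle'', is also unresolved and is not minor: the constant $C(\eps)$ in your integrated bound depends on the compact set $K$ at angle $\sim\eps$ from the axis, and you need it to remain tame as $\eps\to 0$ in order to optimize. In rescaled coordinates a point on your ray at angle $\eps$ sits at distance $\sim 1/\eps$ along the axis of the cylinder, so controlling it requires precisely the growing-radius part of the \cite{CM2} estimate; this is where the two issues interact.

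The paper's proof avoids both problems by arguing differently. It does not integrate $\Hess_u$ along a ray. Instead it uses that the graphical region has \emph{growing} radius $\sqrt{\log s}$: for each $\delta$ one finds a point $y$ on a level set with $(y-\delta v)_-=0$ and $|u(y)|\leq \delta^2(\log|\log\delta|)^{-\beta}$, hence $|y_+|\lesssim \delta(\log|\log\delta|)^{-\beta/2}$. One then reads off $|\nabla u(y)|$ directly from the normal and mean curvature of that level set (no Hessian needed), and the $C^{1,1}$ bound transfers the estimate to $\delta v$ since $|y-\delta v|=|y_+|$. The growing scale is what replaces your uniformity-in-$\eps$ step.
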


\begin{proof}
Given a vector $w$, let $w_+$ be the projection onto $\RR^{k+1}$ and $w_-$ be the projection onto $\RR^{n-k}$, so  $v_+ = 0$ since $v$ points in the direction of the axis.

Let $\Sigma_s$ for $s \geq 0$ be the rescaled MCF  associated to the blowup at $0$ given by
 \begin{align}	\label{e:rescf}
 	\Sigma_s \equiv  \frac{1}{\sqrt{  -t} } \, \{ u = t \}  \,  {\text{ where }} t = - \e^{-s}  \, .
\end{align}
The   uniqueness of \cite{CM2}{\footnote{See theorem $0.2$ and footnote $6$ in \cite{CM2}; the precise rate follows from theorem $6.9$ and lemma $6.1$.}} gives
$C$   so that  $\Sigma_s$
is a graph over $\SS^k_{\sqrt{2k}} \times \RR^{n-k}$ in the ball $B_{\frac{ \sqrt{ \log s } }{C}}$ with radius $\frac{ \sqrt{ \log s } }{C}$ and center $0$  of a function with $C^1$ norm at most $C/  \sqrt{ \log s}$ for $s$ large.  Using \eqr{e:rescf} to translate this to the original flow, we get graphical control on the $M_t$'s when 
\begin{align} 	\label{e:controls}
	|x|^2 \leq - C' \, t \, \log |\log -t|  \, .
\end{align}

Suppose that $\delta_i$ is a sequence going to $0$.  Given any $\beta \in (0,1)$,  the convergence in the growing scale-invariant region \eqr{e:controls} gives points $y_i$ (for $i$ large) in the graphs   with
\begin{align}	\label{e:findyis}
	(y_i - \delta_i v)_{-} = 0 {\text{ and }} |u(y_i)| \leq \frac{ \delta_i^2}{( \log |\log \delta_i|)^{\beta}} \, .
\end{align}
Since we are in the graphical region, we have 
\begin{align}	\label{e:findyis2}
	|(y_i)_+| \leq \sqrt{-u(y_i)} (\sqrt{2k} + 1) \leq (\sqrt{2k} + 1) \,
	\frac{ \delta_i}{( \log |\log \delta_i|)^{\beta/2}}  \,  .
\end{align}
If $u_c$ was the arrival time for the cylinder itself, 
 we would have $\nabla u_c (y_i)=  - \frac{(y_i)_+}{k}$.  Since $y_i$ is in the graphical region, \eqr{e:findyis2} implies that
 \begin{align}
 	\frac{\left| \nabla u (y_i) \right|}{\delta_i} \leq \frac{ C}{( \log |\log \delta_i|)^{\beta/2}} \, .
 \end{align}
 The theorem follows (for any $\alpha < \beta/2$) from this and using
 the $C^{1,1}$ bound on $u$ to get
\begin{align}
	 |\nabla u (y_i) - \nabla u (\delta_i \, v)|   \leq C\,  |y_i - \delta_i v| = C \, |(y_i)_+|  \, .
\end{align}

\end{proof}

\end{document}